\newtheorem{definition}{Definition}[section]
\newtheorem{theorem}[definition]{Theorem}
\newtheorem{proposition}[definition]{Proposition}
\newtheorem{lemma}[definition]{Lemma}
\newtheorem{remark}[definition]{Remark}
\renewcommand{\leqslant}{\leq}
\renewcommand{\geqslant}{\geq}
\newcommand{\bN}{\mathbb{N}}
\newcommand{\bP}{\mathbb{P}}
\newcommand{\bR}{\mathbb{R}}
\newcommand{\bE}{\mathbb{E}}
\newcommand{\cF}{\mathcal{F}}
\newcommand{\cL}{\mathcal{L}}
\newcommand{\cS}{\mathcal{S}}
\newcommand{\al}{\alpha}
\newcommand{\be}{\beta}
\newcommand{\de}{\delta}
\newcommand{\ep}{\varepsilon}
\newcommand{\ga}{\gamma}
\newcommand{\he}{\theta}
\newcommand{\om}{\omega}
\newcommand{\Ga}{\Gamma}
\newcommand{\sgn}{\mathrm{sgn}}
\newcommand{\supp}{\mathrm{supp}}
\numberwithin{equation}{section}
\begin{document}
\title{\bf Tanaka formula for strictly stable processes}
\author{Hiroshi TSUKADA
\footnote{Graduate School of Science,
Osaka City University,
Japan.
e-mail: \texttt{hrstsukada@gmail.com}}}
\date{}
\maketitle
\begin{abstract}
For symmetric L\'evy processes, if the local times exist, the Tanaka formula has already constructed via the techniques in the potential theory by Salminen and Yor (2007).
In this paper, we study the Tanaka formula for arbitrary strictly stable processes with index $\al \in (1,2)$
including spectrally positive and negative cases in a framework of It\^o's stochastic calculus.
Our approach to the existence of local times for such processes is different from Bertoin (1996).
\end{abstract}

\section{Introduction}\label{sec1}
First, we begin with the definition of a local time for a stochastic process $X = (X_t)_{t\geqslant0}$.
\begin{definition}\rm{
A family of random variables 
$L = \{L^a_t: a \in \bR, t\geqslant 0\}$ is a local time of $X$
if, for any bounded Borel measurable function $f : \bR \to [0,\infty)$ and $t \geqslant0$,
\begin{equation*}
\int^t_0 f(X_s) ds = \int_\bR f(a) L^a_t da \qquad \text{a.s.},
\end{equation*}
which is called an occupation time formula.
}
\end{definition}

It is known that there exist local times of Brownian motions (cf. Berman \cite{Berm})
and necessary and sufficient conditions of existence for L\'evy processes can be found in Bertoin \cite{Bert}.
They considered the local time as the Radon--Nikodym derivative of the occupation time measure $\mu_t$ defined,
for any Borel measurable function $f : \bR \to [0,\infty)$ and $t \geqslant 0$,
by
\begin{equation*}
\int^t_0 f(X_s) ds = \int_\bR f(a) \mu_t(da).
\end{equation*}
The existence of almost surely jointly continuous local times
was studied by Trotter \cite{Tro} for Brownian motions,
and by Boylan \cite{Boy} for stable processes with index $\al \in (1,2)$,
and necessary and sufficient conditions for the almost sure joint continuity of local times were given by Barlow \cite{Bar}.

Let $B=(B_t)_{t\geqslant0}$ be a Brownian motion on $\bR$.
It is well known that the Tanaka formula for a Brownian motion holds:
\begin{equation*}
|B_t-a| - |B_0-a| = \int^t_0 \sgn(B_s-a) dB_s + L^a_t,
\end{equation*}
where $L^a$ denotes the local time of the Brownian motion at level $a$,
and for more details, see, e.g. Ikeda and Watanabe \cite{Ike}.
By L\'evy's characterization, $\int^\cdot_0 \sgn(B_s-a) dB_s$ is another Brownian motion.
By setting $a = 0$, we know the process $|B|$ is the reflection of the Brownian motion which means that $(|B|,L^0)$ is the solution of the Skorohod problem for the Brownian motion.
By the Doob--Meyer decomposition, 
the local time $L^a$ can be understood as the unique bounded variation process which is the difference of the positive submartingale $|B-a|$ and the martingale $\int^\cdot_0\sgn(B_s-a)dB_s+|B_0-a|$. 
Thus, in this paper we say that the Tanaka formula holds if the local time can be expressed as the difference of such processes because it has a gap between the Skorohod solution and local times in the case of jump processes.
The Tanaka formula has been already studied by Yamada \cite{Yam} for symmetric stable processes with index $\al\in(1,2)$, and by Salminen and Yor \cite{Sal} for symmetric L\'evy processes when local times exist.
On the other hand, Watanabe \cite{Wat} and Engelbert, Kurenok and Zalinescu \cite{Eng} focused on the solution of the Skorohod problem.

In \cite{Sal}, they constructed the Tanaka formula for symmetric L\'evy process $X$ by using the continuous resolvent density $u^p$:
\begin{equation*}
v(X_t-a)=v(a)+M^a_t+L^a_t
\end{equation*}
where $v(x) := \lim_{p \to 0}\left(u^p(0)-u^p(x)\right)$ which is called a renormalized zero resolvent, and $M^a$ is a martingale.
But the existence of the limit is not clear in the asymmetric case and the representation for the martingale part is not given.
In \cite{Yan} Yano obtained a renormalized zero resolvent for asymmetric L\'evy process under some conditions, and it associates the Tanaka formula based on local times with a harmonic function for the killed process upon hitting zero
because the local time of such processes at level zero becomes zero.

In this paper, we shall show the Tanaka formula for arbitrary strictly stable processes including spectrally and negative cases with index $\al \in (1,2)$ using It\^o's stochastic calculus.
Although our formula might give an insight on a reflection problem of jump type processes,
the martingale part in the formula is not suitable to become the same law of the original process.
Thus, there exists a gap between the Skorohod solution and the existence of local times in case of jump processes.

In Section \ref{sec2}, we shall prepare the It\^o formula, the infinitesimal generator and the moments about stable processes. 
In Section \ref{sec3}, we construct the Tanaka formula for strictly stable processes.

\section{Preliminaries}\label{sec2}
Let $C^n_{1+,b}(\bR)$ be the elements of $C^n(\bR)$ such that all derivatives of any orders greater than or equal to $1$ are bounded,
and $C^n_c(\bR)$ be the functions in $C^n(\bR)$ having compact support.
Let $\cS(\bR)$ be the Schwartz space of rapidly decreasing functions on $\bR$.

A L\'evy process $Y=(Y)_{t\geqslant0}$ is characterized by
the Levy--Khintchine representation given as
\begin{equation*}
\bE[e^{iuY_t}]=\exp\left[t\left(ibu-\frac{1}{2}au^2+\int_{\bR_0}\{ e^{iuy}-1-iuy\textbf{1}_{|y|\leqslant1} \} \nu(dy)\right)\right],
\end{equation*}
for a drift parameter $b\in\bR$, a Gaussian coefficient $a\geqslant0$ and a L\'evy measure $\nu$ on $\bR_0:=\bR\setminus\{0\}$ satisfying the following integrability condition:
\begin{align*}
\int_{\bR_0} (|h|^2\wedge1)\nu(dh)< \infty.
\end{align*}
Note that a L\'evy process is characterized by the triplet $(b,a,\nu)$.

In this paper,
we are concerned with one-dimensional strictly stable process $X=(X_t)_{t\geqslant0}$ with index $\al \in (1,2)$.
This process $X$ is characterized by the triplet $(b_\al,0,\nu_\al)$
where the L\'evy measure $\nu_\al$ is given by
\begin{equation*}
\nu_\al(dh) =
  \begin{cases}
  c_+|h|^{-\al-1}dh &\quad \text{on $(0,\infty)$}, \\
  c_-|h|^{-\al-1}dh &\quad \text{on $(-\infty,0)$},
  \end{cases}
\end{equation*}
where $c_+$ and $c_-$ are non-negative constants such that $c_+ + c_- > 0$,
and the drift parameter $b_\al$ is given by
\begin{align*}
b_\al=-\int_{|h|>1}h\nu_\al(dh)=-\frac{c_+-c_-}{\al-1}.
\end{align*}
The L\'evy--Khintchine representation for $X$ is given as
\begin{align*}
\bE [e^{iuX_t}] = e^{t\eta(u)}, \quad u\in\bR,
\end{align*}
where the function $\eta$ is the L\'evy symbol of $X$ given by
\begin{equation*}
\eta(u) = -d|u|^\al \left(1-i\be\sgn(u)\tan\left(\frac{\pi \al}{2}\right)\right),
\end{equation*}
where $d > 0$ and $\be \in [-1,1]$ are given by
\begin{equation*}
d=\frac{c_+ +c_-}{2c(\al)}, \qquad \be=\frac{c_+-c_-}{c_+ +c_-}
\end{equation*}
with
\begin{equation*}
c(\al)=\frac{1}{\pi}\Ga(\al+1)\sin\left(\frac{\pi \al}{2}\right)
\end{equation*}
and the left-continuous signum function denoted by
\begin{equation*}
\sgn(x) :=
  \begin{cases}
  1 &\quad \text{if $x > 0$}, \\
  -1 &\quad \text{if $x \leqslant 0$}.
  \end{cases}
\end{equation*}

By the L\'evy--It\^o decomposition (see, Applebaum \cite[Theorem 2.4.16]{App}),
$X$ can be represented by
\begin{align}\label{2-1}
X_t &= X_0 - \int_{|h|>1}h\nu_\al(dh)t + \int^t_0 \int_{|h|\leqslant1}h\tilde{N}(ds,dh) + \int^t_0 \int_{|h|>1}h N(ds,dh) \notag
\\&= X_0 +\int^t_0\int_{\bR_0}h\tilde{N}(ds,dh),
\end{align}
where $N$ is the jump measure of $X$ which is a Poisson random measure with the intensity measure $ds\nu_\al(dh)$ and the compensated measure $\tilde N$ is defined by
\begin{equation*}
\tilde N(ds,dh) := N(ds,dh) - ds\nu_\al(dh).
\end{equation*}
Since $\bE[|X_t|]<\infty$, it follows from \eqref{2-1} that $X$ is a martingale.
From the It\^o formula (\cite[Theorem 4.4.7]{App}),
we have the formula for \eqref{2-1}.

\begin{proposition}\label{prop2-1}
For each $f \in C^2_{1+,b}(\bR)$, the following formula holds:
\begin{align*}
f(X_t) - f(X_0)
&=-\int^t_0 \int_{|h|>1}f'(X_s)h \nu_\al(dh)ds
\\&\quad+\int^t_0 \int_{|h| > 1} \{f(X_{s-} + h) - f(X_{s-})\} N(ds,dh)
\\&\quad+\int^t_0 \int_{|h| \leqslant 1} \{f(X_{s-} + h) - f(X_{s-})\} \tilde{N}(ds,dh)
\\&\quad+\int^t_0 \int_{|h| \leqslant 1} \{f(X_s + h) - f(X_s) - f'(X_s)h\} \nu_\al(dh)ds.
\end{align*}
\end{proposition}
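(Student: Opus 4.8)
The plan is to read the identity off the general It\^o formula for L\'evy-type stochastic integrals, \cite[Theorem 4.4.7]{App}, applied to the representation \eqref{2-1} in the differential form
\[
dX_t = b_\al\,dt + \int_{|h|\leqslant1} h\,\tilde N(dt,dh) + \int_{|h|>1} h\,N(dt,dh),
\qquad b_\al=-\int_{|h|>1}h\,\nu_\al(dh).
\]
For a sufficiently smooth $f$, that formula produces a drift term $\int^t_0 f'(X_{s-})b_\al\,ds$, the compensated small-jump integral $\int^t_0\!\int_{|h|\leqslant1}\{f(X_{s-}+h)-f(X_{s-})\}\tilde N(ds,dh)$, its compensator $\int^t_0\!\int_{|h|\leqslant1}\{f(X_{s-}+h)-f(X_{s-})-f'(X_{s-})h\}\nu_\al(dh)ds$, and the large-jump integral $\int^t_0\!\int_{|h|>1}\{f(X_{s-}+h)-f(X_{s-})\}N(ds,dh)$. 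The stated identity then follows by two cosmetic steps: using $b_\al=-\int_{|h|>1}h\,\nu_\al(dh)$ to rewrite the drift term as $-\int^t_0\!\int_{|h|>1}f'(X_s)h\,\nu_\al(dh)ds$, and replacing $X_{s-}$ by $X_s$ inside the two $ds$-integrals, which is legitimate since $X_{s-}=X_s$ for all but countably many $s$.

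The point needing care is that $f\in C^2_{1+,b}(\bR)$ need not be bounded — it may grow linearly — so rather than invoking the formula as a black box I would first reduce to bounded $f$ by truncation. By Taylor expansion, the hypothesis on $f$ gives $|f(x+h)-f(x)|\leqslant\|f'\|_\infty|h|$ and $|f(x+h)-f(x)-f'(x)h|\leqslant\tfrac12\|f''\|_\infty h^2$ for all $x,h\in\bR$. Since $\al\in(1,2)$ we have $\int_{|h|\leqslant1}h^2\,\nu_\al(dh)=\tfrac{c_++c_-}{2-\al}<\infty$ and, crucially, $\int_{|h|>1}|h|\,\nu_\al(dh)=\tfrac{c_++c_-}{\al-1}<\infty$; hence each of the four terms in the statement is absolutely convergent (in $L^2$ for the compensated integral, in $L^1$ for the compensator and the large-jump integral), and $f(X_t)\in L^1$ because $\bE[|X_t|]<\infty$.

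For $n\in\bN$ I would choose $f_n\in C^2(\bR)$, bounded together with its first two derivatives, with $f_n=f$ on $[-n,n]$ and $\sup_n(\|f_n'\|_\infty+\|f_n''\|_\infty)<\infty$ — built by smoothly truncating $f'$ to $[-n,n]$ and integrating, rather than truncating $f$ itself. It\^o's formula applies to each $f_n$ and yields the four-term identity with $f$ replaced by $f_n$. Since $f_n\to f$ pointwise on all of $\bR$, we get $f_n(X_t)\to f(X_t)$, $f_n(X_0)\to f(X_0)$, while the integrand of each right-hand term converges a.e.\ and is dominated using the bounds above; dominated convergence, together with the It\^o isometry for the compensated term, lets us pass to the limit and obtain the identity for $f$. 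The main obstacle is exactly this unboundedness of $f$: one cannot localise by stopping $X$ at $\tau_n=\inf\{t:|X_t|\vee|X_{t-}|\geqslant n\}$ in the naive way, because the jump integrands involve $f(X_{s-}+h)$ with $X_{s-}+h$ large even when $X_{s-}$ is bounded; it is the $O(|h|)$ control on the increments of $f$ against the integrability $\int_{|h|>1}|h|\,\nu_\al(dh)<\infty$ — that is, the standing assumption $\al>1$ — that makes the dominated-convergence step go through.
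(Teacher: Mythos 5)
Your proof is correct and follows essentially the same route as the paper, which simply cites \cite[Theorem 4.4.7]{App} applied to the decomposition \eqref{2-1} and gives no further argument for this proposition. Your additional truncation step addressing the possible unboundedness of $f$ is sound (and harmless), though Applebaum's theorem is stated for arbitrary $f\in C^2(\bR)$, so the identity already follows from the citation together with the integrability observations you record.
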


\begin{remark}
\rm{
Since $f \in C^2_{1+,b}(\bR)$,
it follows from the mean value theorem that
\begin{align*}
\int_{|h| \leqslant 1}|f(x+h) - f(x)|^2\nu_\al(dh)
&=\int_{|h| \leqslant 1}\left|\int^1_0f'(x+\he h)hd\he\right|^2\nu_\al(dh)
\\&\leqslant K^2\int_{|h| \leqslant 1} |h|^2\nu_\al(dh)
=\frac{K^2(c_++c_-)}{2-\al}.
\end{align*}
where $K$ is a positive constant such that $|f'(x)|\leqslant K$ for all $x\in\bR$.
Thus, the integral $\int^t_0 \int_{|h| \leqslant 1} \{f(X_{s-} + h) - f(X_{s-})\} \tilde{N}(ds,dh)$ with respect to the compensated Poisson random measure is a square integrable martingale.
}
\end{remark}

Then, we can rewrite Proposition \ref{prop2-1} to the following:
\begin{proposition}\label{prop2-2}
For each $f \in C^2_{1+,b}(\bR)$, the following formula holds:
\begin{align*}
f(X_t) - f(X_0)
&=\int^t_0 \int_{\bR_0} \{f(X_{s-} + h) - f(X_{s-})\} \tilde{N}(ds,dh)
\\&\quad+\int^t_0 \int_{\bR_0} \{f(X_{s}+h) - f(X_{s})-f'(X_s)h\} \nu_\al(dh)ds.
\end{align*}
\end{proposition}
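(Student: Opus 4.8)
The plan is to obtain Proposition~\ref{prop2-2} from Proposition~\ref{prop2-1} by compensating the large-jump integral against $N$ and then regrouping the resulting terms.

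First I would record the integrability estimate that makes the whole argument work. Since $f\in C^2_{1+,b}(\bR)$, its derivative is bounded, say $|f'|\le K$, so by the mean value theorem $|f(x+h)-f(x)|\le K|h|$ for all $x,h\in\bR$; because $\al\in(1,2)$,
\[
\int_{|h|>1}|f(x+h)-f(x)|\,\nu_\al(dh)\le K\int_{|h|>1}|h|\,\nu_\al(dh)=\frac{K(c_++c_-)}{\al-1}<\infty
\]
uniformly in $x$. This is precisely the place where the hypothesis $\al>1$ is used, and it is what allows the finite-activity integral $\int_0^t\int_{|h|>1}\{f(X_{s-}+h)-f(X_{s-})\}N(ds,dh)$ appearing in Proposition~\ref{prop2-1} to be split into a martingale part plus its compensator, each separately well defined (indeed, the bound gives $\bE\int_0^t\int_{|h|>1}|f(X_{s-}+h)-f(X_{s-})|\,\nu_\al(dh)\,ds<\infty$).

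Second, using this bound I would rewrite that large-jump term as
\begin{align*}
&\int_0^t\!\int_{|h|>1}\{f(X_{s-}+h)-f(X_{s-})\}\,\tilde N(ds,dh)\\
&\quad+\int_0^t\!\int_{|h|>1}\{f(X_{s}+h)-f(X_{s})\}\,\nu_\al(dh)\,ds,
\end{align*}
where in the compensator I have replaced $X_{s-}$ by $X_s$, which is legitimate because $X$ has at most countably many jumps, so the two processes agree for Lebesgue-a.e.\ $s$ and the $ds$-integral is unchanged. Adding the $\tilde N$-integral over $\{|h|\le1\}$ from Proposition~\ref{prop2-1}, which is a square-integrable martingale by the Remark, to the $\tilde N$-integral over $\{|h|>1\}$ just produced (a martingale by the uniform bound above), I obtain the single stochastic integral $\int_0^t\int_{\bR_0}\{f(X_{s-}+h)-f(X_{s-})\}\tilde N(ds,dh)$.

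Third, I would collect the remaining absolutely continuous terms: the new compensator over $\{|h|>1\}$, the drift $-\int_0^t\int_{|h|>1}f'(X_s)h\,\nu_\al(dh)\,ds$ from Proposition~\ref{prop2-1}, and the small-jump compensator $\int_0^t\int_{|h|\le1}\{f(X_s+h)-f(X_s)-f'(X_s)h\}\nu_\al(dh)\,ds$. The first two combine into $\int_0^t\int_{|h|>1}\{f(X_s+h)-f(X_s)-f'(X_s)h\}\nu_\al(dh)\,ds$, and adding the third gives $\int_0^t\int_{\bR_0}\{f(X_s+h)-f(X_s)-f'(X_s)h\}\nu_\al(dh)\,ds$. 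I would also note in passing that $h\mapsto f(x+h)-f(x)-f'(x)h$ is $\nu_\al$-integrable: it is $O(|h|^2)$ as $h\to0$ by Taylor's theorem with $f''$ bounded, and for $|h|>1$ each of the three pieces is integrable as above. Substituting these two regrouped expressions back into Proposition~\ref{prop2-1} yields exactly the stated formula.

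The step I expect to be the main obstacle is the first one: one must make sure that $\int_{|h|>1}|f(x+h)-f(x)|\,\nu_\al(dh)<\infty$, since otherwise neither the compensated large-jump integral nor its compensator is individually meaningful and the rearrangement has no content. Once that uniform bound is established, everything else is routine bookkeeping with $ds\,\nu_\al(dh)$-integrals, together with the standard fact that $X_{s-}=X_s$ outside a Lebesgue-null set of times.
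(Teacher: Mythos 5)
Your proposal is correct and follows essentially the same route as the paper: the paper's proof likewise reduces everything to the single integrability check $\int_{|h|>1}|f(x+h)-f(x)|\,\nu_\al(dh)\leqslant K(c_++c_-)/(\al-1)$ via the mean value theorem, and then implicitly performs the same compensation and regrouping of the large-jump term that you spell out explicitly.
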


\begin{proof}
It is sufficient to check
\begin{align*}
\int^t_0 \int_{|h| > 1} \left| f(X_s + h) - f(X_s) \right|\nu_\al(dh)ds < \infty.
\end{align*}
Since $f \in C^2_{1+,b}(\bR)$,
it follows from the mean value theorem that
\begin{align*}
\int_{|h| > 1}|f(x + h) - f(x)|\nu_\al(dh)
&=\int_{|h| > 1}\left|\int^1_0f'(x+\he h)hd\he\right|\nu_\al(dh)
\\&\leqslant K\int_{|h| > 1} |h|\nu_\al(dh)
=\frac{K(c_++c_-)}{\al-1}.
\end{align*}
where $K$ is a positive constant such that $|f'(x)|\leqslant K$ for all $x\in\bR$.
\end{proof}

\begin{remark}
\rm{
The integral $\int^t_0 \int_{|h|>1} \{f(X_{s-} + h) - f(X_{s-})\} \tilde{N}(ds,dh)$ with respect to the compensated Poisson random measure is a martingale by the same argument as the proof of Proposition \ref{prop2-2}.
}
\end{remark}

We consider the operator as follows:
\begin{align*}
\cL f(x) :=\int_{\bR_0} \{f(x+h) - f(x)-f'(x)h\} \nu_\al(dh)
\end{align*}
for all $f \in C^2_{1+,b}(\bR)$ and $x\in \bR$.
The operator $\cL$ coincides with the infinitesimal generator of $X$ on $\cS(\bR)$, see Sato \cite[Theorem 31.5]{Sat}.
Using the operator $\cL$, the It\^o formula (Proposition \ref{prop2-2}) can be rewritten as follows:
\begin{align*}
f(X_t) - f(X_0) &= \int^t_0 \int_{\bR_0} \{f(X_{s-} + h) - f(X_{s-})\} \tilde{N}(ds,dh)
+ \int^t_0 \cL f(X_s) ds.
\end{align*}

Using the Fourier transform of $f \in L^1(\bR)$ defined by
\begin{equation*}
\cF[f](u) :=\frac{1}{\sqrt{2\pi}} \int_\bR e^{-iux} f(x)dx \quad \text{for $u \in \bR$},
\end{equation*}
and the inverse Fourier transform of $f \in L^1(\bR)$ defined by
\begin{equation*}
\cF^{-1}[f](x) := \frac{1}{\sqrt{2\pi}} \int_\bR e^{iux} f(u)du \quad \text{for $x \in \bR$},
\end{equation*}
 the operator $\cL$ on $\cS(\bR)$ is also represented as follows: 
\begin{proposition}[{\cite[Theorem 3.3.3]{App}}]\label{prop2-3}
For each $f \in \cS(\bR)$ and $x \in \bR$,
\begin{equation*}
\cL f(x) = \cF^{-1}[\eta(u)\cF[f](u)](x).
\end{equation*}
\end{proposition}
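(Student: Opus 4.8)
The plan is to push the Fourier transform through the integral defining $\cL$ and then read off the resulting symbol. For $f\in\cS(\bR)$ the inversion formula $f(y)=\cF^{-1}[\cF[f]](y)$ holds pointwise, so
\[
f(x+h)-f(x)-f'(x)h=\frac{1}{\sqrt{2\pi}}\int_\bR e^{iux}\bigl(e^{iuh}-1-iuh\bigr)\cF[f](u)\,du.
\]
First I would substitute this into $\cL f(x)=\int_{\bR_0}\{f(x+h)-f(x)-f'(x)h\}\nu_\al(dh)$ and interchange the order of integration. Fubini's theorem applies: by the elementary estimate $|e^{i\theta}-1-i\theta|\leq\min\{2|\theta|,\tfrac12\theta^2\}$ the integrand is dominated in modulus by $\min\{2|u||h|,\tfrac12u^2h^2\}\,|\cF[f](u)|$, and $\int_{\bR_0}(|h|\wedge|h|^2)\,\nu_\al(dh)=\frac{c_++c_-}{\al-1}+\frac{c_++c_-}{2-\al}<\infty$ (this is exactly where $1<\al<2$ enters, at both tails), while $u\mapsto(|u|+u^2)|\cF[f](u)|$ lies in $L^1(\bR)$ because $f\in\cS(\bR)$. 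After the interchange one obtains
\[
\cL f(x)=\frac{1}{\sqrt{2\pi}}\int_\bR e^{iux}\,\psi(u)\,\cF[f](u)\,du=\cF^{-1}\bigl[\psi\,\cF[f]\bigr](x),
\]
where $\psi(u):=\int_{\bR_0}(e^{iuh}-1-iuh)\,\nu_\al(dh)$; so everything reduces to checking $\psi=\eta$.

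To evaluate $\psi$ I would split the integral over the two half-lines, use the substitution $h\mapsto-h$ on $(-\infty,0)$, and reduce to the single integral $G(\la):=\int_0^\infty(e^{-\la r}-1+\la r)\,r^{-\al-1}\,dr$, so that $\psi(u)=c_+\,G(-iu)+c_-\,G(iu)$. For real $\la>0$, two integrations by parts (the boundary terms vanish precisely because $1<\al<2$) give $G(\la)=\frac{\la}{\al(\al-1)}\int_0^\infty r^{1-\al}e^{-\la r}\,dr=\frac{\Gamma(2-\al)}{\al(\al-1)}\la^\al=\Gamma(-\al)\la^\al$; since $|e^{-\la r}-1+\la r|\leq\min\{2|\la| r,\tfrac12|\la|^2r^2\}$ for $\mathrm{Re}\,\la\geq0$, dominated convergence extends this identity continuously to the imaginary axis, and taking $\la=-iu$ and $\la=iu$ yields
\[
\psi(u)=\Gamma(-\al)\,|u|^\al\Bigl(c_+\,e^{-i\pi\al\sgn(u)/2}+c_-\,e^{i\pi\al\sgn(u)/2}\Bigr).
\]
Expanding the exponentials, this equals $\Gamma(-\al)(c_++c_-)\cos\bigl(\tfrac{\pi\al}{2}\bigr)|u|^\al\bigl(1-i\be\sgn(u)\tan(\tfrac{\pi\al}{2})\bigr)$, and the reflection formula $\Gamma(-\al)\Gamma(\al+1)=-\pi/\sin(\pi\al)$ together with $\sin(\pi\al)=2\sin(\tfrac{\pi\al}{2})\cos(\tfrac{\pi\al}{2})$ gives $-\Gamma(-\al)(c_++c_-)\cos(\tfrac{\pi\al}{2})=\frac{c_++c_-}{2c(\al)}=d$; hence $\psi(u)=\eta(u)$.

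The routine parts here are the Fubini bookkeeping and the shuffling of Gamma- and trigonometric-function identities; the only genuinely delicate step is the evaluation of $G$ on the imaginary axis, i.e. the passage from $\la>0$ to $\la=\mp iu$ (equivalently, a rotation of the contour of integration, or the insertion of a regulariser $e^{-\ep r}$ followed by letting $\ep\downarrow0$). I also note that the second half can be bypassed entirely: the L\'evy--It\^o decomposition \eqref{2-1} compensates \emph{all} jumps, so its characteristic exponent is $\psi$ by definition, and the identity $\psi=\eta$ is already contained in the L\'evy--Khintchine representation $\bE[e^{iuX_t}]=e^{t\eta(u)}$ recorded above; combined with the general formula \cite[Theorem 3.3.3]{App}, this gives the statement directly.
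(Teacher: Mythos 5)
Your proposal is correct, but note that the paper offers no proof of this proposition at all: it is quoted verbatim from Applebaum \cite[Theorem 3.3.3]{App} (the symbol of a L\'evy generator acting on $\cS(\bR)$ is the L\'evy exponent), so there is nothing internal to compare against. What you supply is a self-contained verification. Your Fubini step is sound --- the domination by $\min\{2|u||h|,\tfrac12 u^2h^2\}|\cF[f](u)|$ together with $\int_{\bR_0}(|h|\wedge|h|^2)\nu_\al(dh)<\infty$ and the rapid decay of $\cF[f]$ is exactly the right combination, and it is the only place the restriction $1<\al<2$ is needed for integrability. The symbol computation also checks out: $G(\la)=\Ga(-\al)\la^\al$ on $(0,\infty)$ via two integrations by parts (boundary terms vanishing because $1<\al<2$), and the reflection formula indeed gives $-\Ga(-\al)(c_++c_-)\cos(\pi\al/2)=(c_++c_-)/(2c(\al))=d$, so $\psi=\eta$. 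You are right to flag the boundary values $\la=\mp iu$ as the one delicate point; to make it airtight you should say that $G$ is analytic on $\{\Re\la>0\}$, agrees with $\Ga(-\al)\la^\al$ on $(0,\infty)$, and that \emph{both} sides are continuous up to the closed half-plane (your dominated-convergence bound gives this for $G$), which forces equality on the imaginary axis. This regularisation-and-limit device is precisely the technique the paper itself uses in Lemma \ref{lem2-2} and Lemma \ref{lem3-1} with the factor $e^{-\ep|x|}$, so your argument is very much in the paper's spirit even though the paper outsources this particular statement. Your closing observation --- that $\psi=\eta$ can be read off for free by comparing the fully compensated L\'evy--It\^o decomposition \eqref{2-1} with the stated L\'evy--Khintchine representation --- is a legitimate shortcut, with the caveat that the consistency of the triplet $(b_\al,0,\nu_\al)$ with the displayed formula for $\eta$ is itself the computation you would be skipping.
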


We will use the following moments of stable processes.
\begin{lemma}[{\cite[Example 25.10]{Sat}}]\label{lem2-1}
Let $X$ be a stable process of index $\al \in (0,2)$ with the triplet $(b,0,\nu_\al)$.
Then, for $t\geqslant 0$,
\begin{equation*}
\bE|X_t|^\ga =
  \begin{cases}
  < \infty &\quad \text{if $0 <\ga < \al$}, \\
  = \infty &\quad \text{if $\ga \geqslant \al$}.
  \end{cases}
\end{equation*}
\end{lemma}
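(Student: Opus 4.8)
The plan is to reduce everything to the tail behaviour of the L\'evy measure $\nu_\al$. The only computation that matters is
\[
\int_{|h|>1}|h|^\ga\,\nu_\al(dh)=(c_++c_-)\int_1^\infty r^{\ga-\al-1}\,dr,
\]
which, since $c_++c_->0$, is finite exactly when $\ga-\al-1<-1$, i.e. $\ga<\al$, and equals $+\infty$ as soon as $\ga\geqslant\al$; everything else is bookkeeping (the case $t=0$ being trivial).

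For the finiteness direction I would invoke the classical submultiplicative moment criterion for L\'evy processes, Sato \cite[Theorem 25.3]{Sat}: if $g:\bR\to[0,\infty)$ is measurable, locally bounded and submultiplicative — for instance $g(x)=1+|x|^\ga$ or $g(x)=(1\vee|x|)^\ga$ — then $\bE[g(X_t)]<\infty$ for one, equivalently for every, $t>0$ if and only if $\int_{|h|>1}g(h)\,\nu_\al(dh)<\infty$. Since $|x|^\ga$ and $1+|x|^\ga$ differ by a bounded quantity, $\bE|X_t|^\ga<\infty$ is equivalent to $\bE[1+|X_t|^\ga]<\infty$, so for $\ga<\al$ the displayed integral being finite gives the first case. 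For $\ga\geqslant\al$ the same criterion forces $\bE[(1\vee|X_t|)^\ga]=\infty$, and since on $\{|X_t|>1\}$ this integrand equals $|X_t|^\ga$, we get $\bE|X_t|^\ga=\infty$.

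If one prefers an argument not quoting \cite[Theorem 25.3]{Sat}, I would use the L\'evy--It\^o decomposition \eqref{2-1} to split $X_t=X_0+bt+X^{(1)}_t+X^{(2)}_t$, where $X^{(1)}_t=\int^t_0\int_{|h|\leqslant1}h\,\tilde N(ds,dh)$ is a square-integrable martingale (by $\int_{|h|\leqslant1}|h|^2\nu_\al(dh)=\tfrac{c_++c_-}{2-\al}<\infty$, as in the remark following Proposition \ref{prop2-1}) and $X^{(2)}_t=\int^t_0\int_{|h|>1}h\,N(ds,dh)$ is a compound Poisson process, independent of $X^{(1)}$, with finite rate $\la=(c_++c_-)/\al$ and jump law $\la^{-1}\mathbf{1}_{|h|>1}\nu_\al(dh)$. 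For $\ga<\al<2$ one has $X^{(1)}_t\in L^2\subset L^\ga$, the jump law has finite $\ga$-th moment precisely when $\ga<\al$, and $N_t$ has moments of all orders, so a conditioning-on-$N_t$ estimate with the $c_r$-inequality yields $\bE|X_t|^\ga<\infty$. For $\ga\geqslant\al$ I would instead restrict to $\{N_t=1\}$, on which $X^{(2)}_t$ is a single jump $J$ with the heavy-tailed law above; conditioning on the independent remainder $Z:=X_0+bt+X^{(1)}_t$ and using that $|J+z|\geqslant\tfrac12|J|$ once $|J|>2|z|$, together with $\bE|J|^\ga=\infty$, gives $\bE[|X_t|^\ga\mid Z=z]=\infty$ for a.e.\ $z$, hence $\bE|X_t|^\ga=\infty$.

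The quick proof has essentially no obstacle — it is the one-line tail integral plus the cited criterion. In the self-contained version the only delicate point is the divergence direction $\ga\geqslant\al$: one must check that the heavy tail of a single large jump survives both the addition of the independent light-tailed part and the averaging over the number of large jumps, which is exactly why isolating the event $\{N_t=1\}$ is the right device; the boundary value $\ga=\al$, where $\bE|J|^\al$ diverges only logarithmically, is covered by the same estimate.
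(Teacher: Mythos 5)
Your proposal is correct; the paper itself offers no proof of this lemma, stating it as a direct citation of \cite[Example 25.10]{Sat}, and your primary argument via the submultiplicative moment criterion \cite[Theorem 25.3]{Sat} applied to $(1\vee|x|)^\ga$ together with the tail integral $\int_{|h|>1}|h|^\ga\nu_\al(dh)$ is precisely the reasoning behind that cited example. Your self-contained alternative (small-jump $L^2$ martingale plus compound Poisson part, with the divergence for $\ga\geqslant\al$ extracted on $\{N_t=1\}$) is also sound and would make the lemma independent of the citation.
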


Moreover, we will use the following negative-order moments.
\begin{lemma}\label{lem2-2}
Let $X$ be a stable processes of index $\al \in (0,2)$ with the triplet $(b,0,\nu_\al)$.
Then, for all $t>0$ and $x\in\bR$,
\begin{equation*}
\bE[|X_t-x|^{-\ga}] \leqslant S(\al,\ga)t^{-\ga\slash\al} \quad \text{if $0<\ga<1$},
\end{equation*}
where the constant $S(\al,\ga)$ is independent of $x$.
\end{lemma}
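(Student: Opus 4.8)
The plan is to reduce the negative moment to a uniform sup-norm bound on the transition density of $X_t$, together with the elementary fact that $z\mapsto|z|^{-\ga}$ is integrable on bounded intervals of $\bR$ precisely when $\ga<1$. First I would recall that $X_t$ admits a bounded continuous density $p_t$ with respect to Lebesgue measure. Indeed, the drift $b$ contributes only to the imaginary part of the characteristic exponent, so $|\bE[e^{iuX_t}]|=e^{-td|u|^\al}$ with $d>0$ (since $c_++c_->0$); hence $u\mapsto\bE[e^{iuX_t}]$ is integrable for every $t>0$, and Fourier inversion gives
\[
p_t(y)=\frac{1}{2\pi}\int_\bR e^{-iuy}\bE[e^{iuX_t}]\,du,\qquad \|p_t\|_\infty\leq\frac{1}{2\pi}\int_\bR e^{-td|u|^\al}\,du=c_1\,t^{-1/\al},
\]
where $c_1=c_1(\al,d):=\frac{1}{2\pi}\int_\bR e^{-d|v|^\al}\,dv<\infty$, the last equality following from the substitution $v=t^{1/\al}u$.

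Next, I would write the negative moment as an integral against $p_t$ and split the domain around $x$ at a scale $R>0$ to be chosen:
\[
\bE[|X_t-x|^{-\ga}]=\int_\bR|y-x|^{-\ga}p_t(y)\,dy=\int_{|y-x|\leq R}|y-x|^{-\ga}p_t(y)\,dy+\int_{|y-x|>R}|y-x|^{-\ga}p_t(y)\,dy.
\]
On the inner region I bound $p_t(y)\leq\|p_t\|_\infty$ and use $\int_{|z|\leq R}|z|^{-\ga}\,dz=\frac{2R^{1-\ga}}{1-\ga}$ (the only place where $\ga<1$ enters), giving a contribution at most $\frac{2c_1}{1-\ga}\,t^{-1/\al}R^{1-\ga}$. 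On the outer region I bound $|y-x|^{-\ga}\leq R^{-\ga}$ and $\int_\bR p_t\leq1$, giving a contribution at most $R^{-\ga}$. Choosing $R=t^{1/\al}$ balances the two terms and yields
\[
\bE[|X_t-x|^{-\ga}]\leq\left(\frac{2c_1}{1-\ga}+1\right)t^{-\ga/\al},
\]
so one may take $S(\al,\ga)=\frac{2c_1}{1-\ga}+1$, which is manifestly independent of $x$.

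The computation is essentially routine; the one point requiring care is the first step — the existence of the transition density and the sharp bound $\|p_t\|_\infty\leq c_1t^{-1/\al}$, uniform in $x$ — and in particular the observation that the drift does not affect the modulus of the characteristic function, so the estimate applies to the full stable triplet $(b,0,\nu_\al)$ and not merely to the strictly stable one. Alternatively, one could invoke the $\al$-self-similarity of $X$ up to a deterministic centering ($X_t$ is equal in law to $t^{1/\al}X_1+m_t$ for some $m_t\in\bR$, since $\al\neq1$) to reduce to $\sup_{z\in\bR}\bE[|X_1-z|^{-\ga}]<\infty$, which again follows from the boundedness of $p_1$ and $\ga<1$; I would nonetheless present the direct Fourier estimate, as it exhibits the $t^{-\ga/\al}$ dependence without any bookkeeping of $m_t$.
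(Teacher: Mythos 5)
Your proof is correct, but it takes a genuinely different route from the paper's. The paper regularizes the singular function as $|z|^{-\ga}e^{-\ep|z|}$, computes its Fourier transform explicitly via the identity $\int_0^\infty x^{\xi-1}e^{-xz}\,dx=\Gamma(\xi)z^{-\xi}$, and then passes to the limit $\ep\downarrow0$ by dominated convergence to arrive at an (essentially exact) representation
\begin{equation*}
\bE[|X_t-x|^{-\ga}]=\frac{\Gamma(1-\ga)}{\pi}\cos\Bigl(\frac{\pi(\ga-1)}{2}\Bigr)\int_\bR |u|^{\ga-1}e^{t\eta(u)-iux}\,du,
\end{equation*}
which it then bounds by $\int_\bR|u|^{\ga-1}e^{-dt|u|^\al}\,du$ and rescales. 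You instead use only the crude sup-norm bound $\|p_t\|_\infty\leqslant c_1 t^{-1/\al}$ from Fourier inversion and split the occupation integral at the self-similar scale $R=t^{1/\al}$, balancing the near-singularity contribution (where $\ga<1$ is used) against the tail. Your argument is more elementary and more robust: it avoids the delicate interchange of limits, avoids needing $|u|^{\ga-1}e^{t\eta(u)-iux}\in L^1$ pointwise justifications, and makes explicit why the estimate is insensitive to the drift $b$ (a point the paper glosses over, since it writes the density with the strictly stable symbol $\eta$ even though the lemma is stated for a general triplet $(b,0,\nu_\al)$). What the paper's computation buys in exchange is the sharper, essentially exact Fourier formula for the negative moment, which mirrors the $\Gamma$-function manipulations reused later in Lemma \ref{lem3-1}; but for the stated inequality your splitting argument is entirely sufficient. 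Your parenthetical alternative via self-similarity would need the caveat that it fails for $\al=1$ with $c_+\neq c_-$, but since you only offer it as an aside and your main argument does not use it, this does not affect the proof.
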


\begin{proof}
By the monotone convergence theorem, we have
\begin{align*}
\lim_{\ep \downarrow 0}\bE\left[|X_t-x|^{-\ga}e^{-\ep |X_t-x|}\right] = \bE[|X_t-x|^{-\ga}].
\end{align*}
Since $|e^{t\eta(\cdot)}|=e^{-dt|\cdot|^\al} \in L^1(\bR)$ for $t>0$,
transition density $p_t$ is given by
\begin{align*}
p_t(y)=\frac{1}{2\pi}\int_\bR e^{-iuy}e^{t\eta(u)}du,
\end{align*}
for each $t>0$ and $y\in\bR$.
Thus, it follows from Fubini's theorem
that for all $\ep>0$,
\begin{align*}
\bE\left[|X_t-x|^{-\ga}e^{-\ep |X_t-x|} \right]
&=\int_\bR |y-x|^{-\ga}e^{-\ep|y-x|}p_t(y)dy
\\&=\frac{1}{2\pi}\int_\bR e^{t\eta(u)}\int_\bR |y-x|^{-\ga}e^{-\ep|y-x|}e^{-iuy}dydu
\\&=\frac{1}{2\pi}\int_\bR e^{t\eta(u)-iux}\int_\bR|z|^{-\ga}e^{-\ep|z|-iuz}dzdu,
\end{align*}
since $|\cdot|^{-\ga}e^{-\ep|\cdot|}\in L^1(\bR)$.

Now, we will make use of the following identity:
\begin{equation}\label{2-2}
\int^\infty_0 x^{\xi-1}e^{-xz}dx =\Ga(\xi)z^{-\xi}
\end{equation}
for all $\xi>0$ and $\Re z>0$, where $\Re z$ is the real part of $z$.
By $1-\ga>0$ and $\Re(\ep\pm iu)=\ep>0$, we have
\begin{align*}
\int_\bR|z|^{-\ga}e^{-\ep|z|-iuz}dz
&=\int^\infty_0 z^{-\ga}e^{-(\ep+iu)z}dz +\int^\infty_0 z^{-\ga}e^{-(\ep-iu)z}dz
\\&=\Ga(1-\ga)(\ep+iu)^{\ga-1} + \Ga(1-\ga)(\ep-iu)^{\ga-1}.
\end{align*}
We then have for $u\neq0$,
\begin{align*}
\lim_{\ep\downarrow0} \{ (\ep+iu)^{\ga-1}+(\ep-iu)^{\ga-1}\} &=(iu)^{\ga-1}+(-iu)^{\ga-1}
\\&=2|u|^{\ga-1}\cos\left(\frac{\pi(\ga-1)}{2}\right).
\end{align*}
By $-1<\ga-1<0$, we have
\begin{align*}
|\ep\pm iu|^{\ga-1}|e^{t\eta(u)-iux}|\leqslant |u|^{\ga-1}e^{-dt|u|^{\al}} \in L^1(\bR).
\end{align*}
Hence, it follows from the dominated convergence theorem that for all $t>0$ and $x\in\bR$,
\begin{align*}
&\lim_{\ep\downarrow 0}\bE\left[|X_t-x|^{-\ga}e^{-\ep |X_t-x|}\right]
\\&\quad=\frac{\Ga(1-\ga)}{2\pi}\lim_{\ep\downarrow 0}\int_\bR\{(\ep+iu)^{\ga-1}+(\ep-iu)^{\ga-1}\}e^{t\eta(u)-iux}du
\\&\quad=\frac{\Ga(1-\ga)}{\pi}\cos\left(\frac{\pi(\ga-1)}{2}\right)\int_\bR |u|^{\ga-1} e^{t\eta(u)-iux}du
\\&\quad\leqslant\frac{\Ga(1-\ga)}{\pi}\cos\left(\frac{\pi(\ga-1)}{2}\right)\int_\bR |u|^{\ga-1}e^{-dt|u|^{\al}}du
\\&\quad=\frac{\Ga(1-\ga)t^{-\ga\slash\al}}{\pi}\cos\left(\frac{\pi(\ga-1)}{2}\right)\int_\bR|v|^{\ga-1}e^{-d|v|^{\al}}dv
<\infty.
\end{align*}
The proof is now complete.
\end{proof}

In order to use the continuity about the law of $X_t$,
we check the following condition.
\begin{lemma}[{\cite[Theorem 27.4]{Sat}}]\label{lem2-3}
Let $Y$ be a L\'evy process with the triplet $(b,a,\nu)$.
The followings are equivalent.
\begin{flalign*}
\quad(i)&\quad \bP(Y_t=y)=0\quad\text{for all $y\in\bR$ and $t>0$}.&
\\ \quad(ii)&\quad a\neq0\quad\text{or}\quad\nu(\bR_0)=\infty.&
\end{flalign*}
\end{lemma}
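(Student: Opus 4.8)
I would prove the equivalence in contrapositive form: writing $\mu_t$ for the distribution of $Y_t$, the claim becomes that $\mu_t$ has an atom for some $t>0$ \emph{if and only if} $a=0$ and $\nu(\bR_0)<\infty$. The ``if'' part is immediate from the L\'evy--It\^o decomposition: if $a=0$ and $\nu(\bR_0)<\infty$ then $Y$ is a compound Poisson process with a linear drift, $Y_t=b_0t+\sum_{s\le t}\Delta Y_s$ with $b_0:=b-\int_{|y|\le1}y\,\nu(dy)$, and the number of jumps in $[0,t]$ is Poisson with mean $t\nu(\bR_0)$, so $\bP(Y_t=b_0t)\ge e^{-t\nu(\bR_0)}>0$. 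For the ``only if'' part I would first dispose of the Gaussian case: when $a\neq0$, $\mu_t$ is the convolution of the centred Gaussian law of variance $at$ with the infinitely divisible law of the drift-plus-jump part, hence has a density bounded by $(2\pi at)^{-1/2}$ and is atomless; so an atom forces $a=0$.

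The only substantial step is: if $a=0$ and $\nu(\bR_0)=\infty$, then $\mu_t$ has no atom for any $t>0$. The plan is to show that the L\'evy concentration function $Q(\mu_t;l):=\sup_{x\in\bR}\mu_t([x,x+l])$ satisfies $Q(\mu_t;l)\to0$ as $l\downarrow0$, which is equivalent to atomlessness. Using infinite divisibility I would write $\mu_t=(\mu_{t/n})^{*n}$ and apply a Kolmogorov--Rogozin type anti-concentration inequality, $Q(\mu_t;l)\le C\bigl(n\,(1-Q(\mu_{t/n};l))\bigr)^{-1/2}$ with $C$ an absolute constant, and then bound $1-Q(\mu_{t/n};l)$ from below. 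For the latter I would split, in the L\'evy--It\^o decomposition of $Y_{t/n}$, the independent compound-Poisson part $Z$ carrying the jumps of modulus $>4l$ (of finite rate $(t/n)\nu(|y|>4l)$, since $\nu$ is finite away from $0$) off the complementary small-jump-plus-drift part $A$. Since $A_{t/n}\to0$ in probability as $n\to\infty$, for $n$ large $\bP(|A_{t/n}|\le l)\ge\tfrac12$ and $\bP(|Y_{t/n}|>l)\le\tfrac12$; then a test interval $[x,x+l]$ disjoint from $[-l,l]$ gives $\bP(Y_{t/n}\notin[x,x+l])\ge\bP(|Y_{t/n}|\le l)\ge\tfrac12$, while for $[x,x+l]\subseteq[-2l,2l]$, on the event that $A$ stays within $l$ of $0$ and $Z$ makes exactly one jump one has $|Y_{t/n}|>3l$, so $Y_{t/n}\notin[x,x+l]$ and $\bP(Y_{t/n}\notin[x,x+l])\ge c\,(t/n)\nu(|y|>4l)$ for $n$ large. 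Every length-$l$ interval falls into one of these two cases, so $1-Q(\mu_{t/n};l)\ge c\min\{1,(t/n)\nu(|y|>4l)\}$ for all large $n$, hence $n(1-Q(\mu_{t/n};l))\ge c\,t\,\nu(|y|>4l)$ and $Q(\mu_t;l)\le C'(t\,\nu(|y|>4l))^{-1/2}$; since $\nu(|y|>4l)\uparrow\nu(\bR_0)=\infty$ as $l\downarrow0$ we conclude $Q(\mu_t;l)\to0$, so $\mu_t$ is atomless.

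The part I expect to be the main obstacle is exactly that lower bound on $1-Q(\mu_{t/n};l)$: one must quantify, uniformly over the position of the test interval, that superimposing a single large jump genuinely moves $Y_{t/n}$ off that interval while the complementary small part does not undo the displacement, which is the interplay of anti-concentration of the compound-Poisson part with concentration near $0$ of the small part. This is carried out carefully in \cite[Theorem~27.4 and its proof]{Sat}. I note finally that for the strictly stable processes of index $\al\in(1,2)$ studied here only the implication ``$\nu_\al(\bR_0)=\infty\Rightarrow\bP(X_t=y)=0$'' is needed, and there it is immediate without any of this machinery: $\nu_\al(\bR_0)=\infty$ trivially, and, as already used in the proof of Lemma~\ref{lem2-2}, for $t>0$ the law of $X_t$ has the bounded continuous density $p_t(y)=\tfrac1{2\pi}\int_\bR e^{-iuy}e^{t\eta(u)}\,du$, so $\bP(X_t=y)=0$ for all $y\in\bR$ and $t>0$.
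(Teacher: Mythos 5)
The paper does not prove this lemma at all: it is quoted verbatim from Sato \cite[Theorem 27.4]{Sat}, and the only consequence the paper extracts is the single implication $\nu_\al(\bR_0)=\infty\Rightarrow\bP(X_t=y)=0$. Your sketch therefore does more than the paper does, and it is essentially sound: the easy implications (a compound Poisson process with drift has an atom at $b_0t$ of mass at least $e^{-t\nu(\bR_0)}$; a nonzero Gaussian component forces a bounded density) are handled as in any proof, and for the substantial implication your route --- the concentration function $Q(\mu_t;\cdot)$, the factorization $\mu_t=(\mu_{t/n})^{*n}$, the Kolmogorov--Rogozin anti-concentration inequality, and the one-large-jump lower bound on $1-Q(\mu_{t/n};l)$ --- is a correct, classical Doeblin--L\'evy style argument. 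It is, however, not the argument in the cited source: Sato proves Theorem 27.4 via the Wiener criterion, namely that $\mu_t$ is continuous if and only if $\frac{1}{2T}\int_{-T}^{T}|\hat{\mu}_t(z)|^{2}dz\to0$ as $T\to\infty$, combined with a series expansion of $\exp\bigl(2\int\cos(zy)\,\nu(dy)\bigr)$ and a Ces\`aro averaging; no concentration functions appear there. Your approach buys a quantitative modulus of continuity, $Q(\mu_t;l)\leqslant C\left(t\,\nu(|y|>4l)\right)^{-1/2}$, at the price of invoking Kolmogorov--Rogozin, which is itself a nontrivial input; Sato's Fourier argument is softer but purely qualitative. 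Your closing remark is the most pertinent point for this paper: since $|e^{t\eta(u)}|=e^{-dt|u|^{\al}}\in L^{1}(du)$, the stable law $X_t$ has a bounded continuous density $p_t$ (already written down in the proof of Lemma \ref{lem2-2}), so the one direction the paper actually uses is immediate and the full equivalence of Lemma \ref{lem2-3} is never needed.
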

Thus, by $\nu_\al(\bR_0)=\infty$, we have that the law of $X_t$ is continuous for all $t>0$.

\section{Main results}\label{sec3}
Before we establish the Tanaka formula for general strictly stable processes,
we need the following lemma.
In case of $\be =0$, that is, $X$ is a symmetric stable process with index $\al \in(1,2)$,
it is shown in Komatsu \cite{Kom} the following result:
\begin{lemma}\label{lem3-1}
Let 
\begin{equation*}
F(x) := D(\al)(1-\be\sgn(x))|x|^{\al-1}
\end{equation*}
where 
\begin{align*}
D(\al)=\frac{c(-\al)}
{d\left(1 + \be^2 \tan^2 (\pi \al \slash 2)\right)}.
\end{align*}
Then,
we have for all $\phi\in C^\infty_c(\bR)$ and $x\in\bR$,
\begin{align*}
\cL (F*\phi)(x)=\phi(x)
\end{align*}
where $F * \phi$ is given by  the convolution of $F$ and $\phi$:
\begin{equation*}
(F*\phi)(x) := \int_\bR F(y)\phi(x-y) dy.
\end{equation*}
\end{lemma}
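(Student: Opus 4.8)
The plan is to show that, up to the stated normalising constant, $F$ is a fundamental solution of $\cL$; concretely, I would establish the two identities $\cL(F*\phi)=F*(\cL\phi)$ and $F*(\cL\phi)=\phi$ for every $\phi\in C^\infty_c(\bR)$. Since $\al\in(1,2)$ gives $\al-1\in(0,1)$, the kernel $F$ is continuous, vanishes at $0$, satisfies $|F(x)|\leqslant C|x|^{\al-1}$, and is locally integrable, so $F*\phi\in C^\infty(\bR)$ with $(F*\phi)^{(k)}=F*\phi^{(k)}$; using $\int_\bR\phi^{(k)}(y)\,dy=0$ for $k\geqslant1$ together with Taylor's theorem one gets $(F*\phi)^{(k)}(x)=O(|x|^{\al-1-k})$, so that $F*\phi\in C^2_{1+,b}(\bR)$ and $\cL(F*\phi)$ is well defined by the integral formula. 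Writing $(F*\phi)(x+h)$, $(F*\phi)(x)$ and $(F*\phi)'(x)$ as convolutions of $F$ against $\phi(\cdot+h)$, $\phi$ and $\phi'$ respectively gives
\begin{equation*}
(F*\phi)(x+h)-(F*\phi)(x)-(F*\phi)'(x)h=\int_\bR F(x-z)\left\{\phi(z+h)-\phi(z)-\phi'(z)h\right\}dz,
\end{equation*}
and integrating against $\nu_\al(dh)$ and interchanging the order of integration yields $\cL(F*\phi)=F*(\cL\phi)$. The interchange is justified by Fubini's theorem, splitting at $|h|=1$: for $|h|\leqslant1$ the inner increment is $O(h^2)$ and supported on a fixed compact $z$-set, and $\int_{|h|\leqslant1}|h|^2\nu_\al(dh)<\infty$; for $|h|>1$ one estimates the three summands of the increment separately, using the polynomial growth of $F$, the compact support of $\phi$ and $\phi'$, and $\int_{|h|>1}|h|\,\nu_\al(dh)<\infty$ (which is where $\al>1$ enters).

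Next I would prove $F*(\cL\phi)=\phi$ by a Fourier computation in the spirit of Lemma \ref{lem2-2}. By Proposition \ref{prop2-3}, $\cL\phi=\cF^{-1}[\eta\,\cF\phi]$, so $\eta\,\cF\phi$ is continuous, rapidly decreasing and integrable and $\cL\phi$ is bounded; moreover $\cL\phi(z)=\int_{\bR_0}\phi(z+h)\,\nu_\al(dh)=O(|z|^{-\al-1})$ for large $|z|$, hence $\cL\phi\in L^1(\bR)$. Since $|F(x-z)\cL\phi(z)|\leqslant C(1+|x-z|)^{\al-1}|\cL\phi(z)|$ is $z$-integrable, dominated convergence gives $(F*\cL\phi)(x)=\lim_{\ep\downarrow0}(F_\ep*\cL\phi)(x)$ with $F_\ep:=F\,e^{-\ep|\cdot|}\in L^1(\bR)$. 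Exactly as in the proof of Lemma \ref{lem2-2}, identity \eqref{2-2} gives, for $\ep>0$,
\begin{equation*}
\sqrt{2\pi}\,\cF[F_\ep](u)=D(\al)\Ga(\al)\left\{(1-\be)(\ep+iu)^{-\al}+(1+\be)(\ep-iu)^{-\al}\right\}.
\end{equation*}
Since $|\cF[F_\ep](u)|\leqslant\tfrac{2D(\al)\Ga(\al)}{\sqrt{2\pi}}|u|^{-\al}$ while $|\eta(u)|=d|u|^\al\sqrt{1+\be^2\tan^2(\pi\al/2)}$, the function $\cF[F_\ep]\,\eta\,\cF\phi$ is integrable, so Fourier inversion yields
\begin{equation*}
(F_\ep*\cL\phi)(x)=\int_\bR e^{iux}\,\cF[F_\ep](u)\,\eta(u)\,\cF[\phi](u)\,du.
\end{equation*}

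Finally I would let $\ep\downarrow0$. For $u\neq0$ one has $(\ep\pm iu)^{-\al}\to|u|^{-\al}e^{\mp i\pi\al\sgn(u)/2}$ along the principal branch, whence
\begin{equation*}
\sqrt{2\pi}\,\cF[F_\ep](u)\longrightarrow 2D(\al)\Ga(\al)\cos\!\left(\frac{\pi\al}{2}\right)|u|^{-\al}\left(1+i\be\sgn(u)\tan\!\left(\frac{\pi\al}{2}\right)\right),
\end{equation*}
and therefore $\cF[F_\ep](u)\,\eta(u)\to-2dD(\al)\Ga(\al)\cos(\pi\al/2)\big(1+\be^2\tan^2(\pi\al/2)\big)/\sqrt{2\pi}$. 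This limit equals $(2\pi)^{-1/2}$ precisely when $2D(\al)\Ga(\al)\cos(\pi\al/2)\big(1+\be^2\tan^2(\pi\al/2)\big)=-1/d$; substituting $c(-\al)=-\tfrac1\pi\Ga(1-\al)\sin(\pi\al/2)$, this reduces to $\Ga(\al)\Ga(1-\al)\sin(\pi\al)=\pi$, i.e.\ Euler's reflection formula, which holds. Because $|\cF[F_\ep](u)\,\eta(u)\,\cF[\phi](u)|\leqslant C|\cF[\phi](u)|\in L^1(\bR)$ uniformly in $\ep$, dominated convergence gives $(F_\ep*\cL\phi)(x)\to\int_\bR e^{iux}(2\pi)^{-1/2}\cF[\phi](u)\,du=\phi(x)$, so $\cL(F*\phi)=F*(\cL\phi)=\phi$.

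I expect the main obstacle to lie in the last two steps: one must keep track of the correct complex branch of $z^{-\al}$ so that the $\ep\downarrow0$ limit of $\cF[F_\ep]$ reproduces exactly $\eta(u)^{-1}$ — note in particular that the overall sign is negative, since $\cos(\pi\al/2)<0$ for $\al\in(1,2)$ — and one must supply the $\ep$-uniform integrable majorant that licenses the final passage to the limit. The reduction of the normalising constant $D(\al)$ to Euler's reflection formula is the single genuinely non-mechanical identity; the interchange of integrals in the first step is routine once the compact support of $\phi$ together with $\int_{\bR_0}(|h|^2\wedge1)\nu_\al(dh)<\infty$ and $\int_{|h|>1}|h|\,\nu_\al(dh)<\infty$ are used.
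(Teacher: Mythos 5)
Your proposal is correct, and it reaches the conclusion by a genuinely different organization of the argument than the paper's, even though the two share the same computational core. The paper never forms $F*(\cL\phi)$: instead it regularizes the kernel to $F_\ep(x)=F(x)e^{-\ep|x|}$ so that $F_{\ep,\phi}:=F_\ep*\phi\in\cS(\bR)$, applies Proposition \ref{prop2-3} directly to $F_{\ep,\phi}$, and then passes to the limit $\ep\downarrow0$ \emph{inside the $h$-integral defining $\cL$}, which requires the uniform-in-$\ep$ pointwise bounds $|F_{\ep,\phi}(x+h)-F_{\ep,\phi}(x)-F'_{\ep,\phi}(x)h|\leqslant C(|h|\wedge|h|^2)$ obtained by integrating by parts against $F'_\ep$. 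You instead commute $\cL$ past the convolution onto $\phi$ (justified by Fubini with the $|h|\leqslant1$ / $|h|>1$ splitting, which is sound), and only then introduce the damping $F_\ep$ to compute $F*(\cL\phi)$ via the convolution theorem; the limit interchange you need is a dominated convergence in the $u$-variable with the $\ep$-uniform majorant $C|\cF[\phi](u)|$, which is cleaner than the paper's $h$-integral estimates. Both proofs then hinge on the identical key computation: $\cF[F_\ep](u)=\frac{D(\al)\Ga(\al)}{\sqrt{2\pi}}\{(1-\be)(\ep+iu)^{-\al}+(1+\be)(\ep-iu)^{-\al}\}$ via identity \eqref{2-2}, the branch-correct limit $(\ep\pm iu)^{-\al}\to|u|^{-\al}e^{\mp i\pi\al\sgn(u)/2}$, and the normalization $-2\Ga(\al)c(-\al)\cos(\pi\al/2)=1$, which you correctly identify as Euler's reflection formula (the paper states this identity without naming it). Your route buys a conceptually transparent statement ($F$ is a fundamental solution, $\cL$ commutes with convolution); the paper's route avoids having to discuss integrability and decay of $\cL\phi$ at infinity, at the cost of heavier estimates on the second-order increments of $F_{\ep,\phi}$. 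All the analytic claims you make along the way (local integrability and sublinear growth of $F$, boundedness of the derivatives of $F*\phi$, $\cL\phi(z)=O(|z|^{-\al-1})$, the uniform bound $|\cF[F_\ep](u)\eta(u)|\leqslant C$) check out.
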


\begin{proof}
Let $F_\phi := F * \phi$ for $\phi \in C^\infty_c(\bR)$.
We know $F_\phi \in C^\infty(\bR)$.
Since we have for all $n\in\bN$,
\begin{align*}
F^{(n)}_\phi(x)&=\int_\bR F(y)\phi^{(n)}(x-y)dy
=\int_\bR F'(y)\phi^{(n-1)}(x-y)dy,
\end{align*}
where
\begin{equation*}
F'(y)=(\al-1)D(\al)(\sgn(y)-\be)|y|^{\al-2},
\end{equation*}
we have $F^{(n)}_\phi$ is bounded for all $n\in\bN$.
Thus, we have $F_\phi \in C^\infty_{1+,b}(\bR)$.

Set $F_\ep(x) := F(x)e^{-\ep|x|}$ for $\ep > 0$
and
$F_{\ep,\phi} := F_\ep * \phi \in C^\infty(\bR)$.
Since $1+|x|^2\leqslant2(1+|x-y|^2)(1+|y|^2)$ for all $x,y\in\bR$,
we have for all $k,n\in\bN$,
\begin{align*}
&\sup_{x\in\bR}(1+|x|^2)^k |F^{(n)}_{\ep,\phi}(x)|
\\&\quad\leqslant 2^k\sup_{x\in\bR}\int_\bR(1+|x-y|^2)^k(1+|y|^2)^k  |F_\ep(y)\phi^{(n)}(x-y)|dy
\\&\quad\leqslant 2^{k+1}D(\al)\sup_{z\in\bR}(1+|z|^2)^k|\phi^{(n)}(z)|\int_\bR(1+|y|^2)^k|y|^{\al-1}e^{-\ep|y|}dy<\infty,
\end{align*}
since $\phi\in C^\infty_c(\bR)\subset\cS(\bR)$.
Thus, we have $F_{\ep,\phi}\in\cS(\bR)$.
By Proposition \ref{prop2-3},
we have
\begin{align*}
&\int_{\bR_0}\{F_{\ep,\phi}(x+h)-F_{\ep,\phi}(x)-F'_{\ep,\phi}(x)h\}dh
\\&\quad=\cF^{-1}[\eta(u)\cF[F_{\ep,\phi}](u)](x)
\\&\quad=\sqrt{2\pi}\cF^{-1}[\eta(u)\cF[F_{\ep}](u)\cF[\phi](u)](x).
\end{align*}

Since $|x| e^{-|x|}\leqslant 1$ for all $x\in\bR$, we have
\begin{align*}
|F'_\ep(x)|
&=\biggl|(\al-1)D(\al)(\sgn(x)-\be)|x|^{\al-2}e^{-\ep|x|}
\\&\quad-\ep D(\al)(\sgn(x)-\be)|x|^{\al-1}e^{-\ep|x|}\biggl|
\\&\leqslant 2D(\al)|x|^{\al-2}e^{-\ep|x|}+2\ep D(\al)|x|^{\al-1}e^{-\ep|x|}
\\&\leqslant 4D(\al)|x|^{\al-2}.
\end{align*}
By using integration by parts, it follows from Fubini's theorem that
\begin{align*}
&|F_{\ep,\phi}(x+h)-F_{\ep,\phi}(x)-F'_{\ep,\phi}(x)h|
\\&\quad=\left|\int_\bR F_{\ep}(y)\{\phi(x+h-y)-\phi(x-y)-\phi'(x-y)h\} dy\right|
\\&\quad=\left|\int_\bR F'_\ep(y)h\int^1_0\{\phi(x+\he h-y)-\phi(x-y)\}d\he dy\right|
\\&\quad=\left|\int^1_0\int_\bR F'_\ep(y)h\{\phi(x+\he h-y)-\phi(x-y)\}dyd\he\right|
\\&\quad\leqslant 2C_1(\al)|h|,
\end{align*}
where the constant $C_1(\al)$ is given by
\begin{align*}
C_1(\al) = 4D(\al)\sup_{x\in\bR}\int_\bR|y|^{\al-2}|\phi(x-y)|dy.
\end{align*}
Similarly, it follows that
\begin{align*}
&|F_{\ep,\phi}(x+h)-F_{\ep,\phi}(x)-F'_{\ep,\phi}(x)h|
\\&\quad=\left|\int_\bR F'_{\ep}(y)h^2\int^1_0\phi'(x+\he h-y)(1-\he)d\he dy\right|
\\&\quad=\left|\int^1_0\int_\bR F'_{\ep}(y)h^2\phi'(x+\he h-y)(1-\he)dyd\he\right|
\\&\quad\leqslant \frac{C_2(\al)|h|^2}{2},
\end{align*}
where the constant $C_2(\al)$ is given by
\begin{align*}
C_2(\al) = 4D(\al)\sup_{x\in\bR}\int_\bR|y|^{\al-2}|\phi'(x-y)|dy.
\end{align*}
By $1<\al<2$,
we know
\begin{align*}
\int_{\bR_0}(|h|^2\wedge|h|)\nu_\al(dh)=\frac{c_++c_-}{2-\al}+\frac{c_++c_-}{\al-1}<\infty.
\end{align*}
Hence, 
it follows from the dominated convergence theorem that
\begin{align*}
&\lim_{\ep\downarrow0}\sqrt{2\pi}\cF^{-1}[\eta(u)\cF[F_{\ep}](u)\cF[\phi](u)](x)
\\&\quad=\lim_{\ep\downarrow0}\int_{\bR_0}\{F_{\ep,\phi}(x+h)-F_{\ep,\phi}(x)-F'_{\ep,\phi}(x)h\}dh
\\&\quad=\int_{\bR_0} \{F_{\phi}(x+h)-F_{\phi}(x)-F'_{\phi}(x)h\}dh.
\end{align*}

Now, by using \eqref{2-2},
we have
\begin{align*}
\cF[|x|^{\al-1}e^{-\ep|x|}](u)
=\frac{\Ga(\al)}{\sqrt{2\pi}}\left( (\ep+iu)^{-\al}+(\ep-iu)^{-\al}\right),
\end{align*}
and
\begin{align*}
\cF[|x|^{\al-1}\sgn(x)e^{-\ep|x|}](u)
=\frac{\Ga(\al)}{\sqrt{2\pi}}\left( (\ep+iu)^{-\al}-(\ep-iu)^{-\al}\right).
\end{align*}
We then have for $u\neq0$,
\begin{align*}
\lim_{\ep\downarrow0}\cF[|x|^{\al-1}e^{-\ep|x|}](u)
&=\frac{\Ga(\al)}{\sqrt{2\pi}}|u|^{-\al}\left(i^{-\al}+(-i)^{-\al}\right)
\\&=\frac{2\Ga(\al)}{\sqrt{2\pi}}|u|^{-\al}\cos\left(\frac{\pi\al}{2}\right),
\end{align*}
and
\begin{align*}
\lim_{\ep\downarrow0}\cF[|x|^{\al-1}e^{-\ep|x|}\sgn(x)](u)
&=\frac{\Ga(\al)}{\sqrt{2\pi}}|u|^{-\al}\sgn(u)\left(i^{-\al}-(-i)^{-\al}\right)
\\&=-i\frac{2\Ga(\al)}{\sqrt{2\pi}}|u|^{-\al}\sgn(u)\sin\left(\frac{\pi\al}{2}\right).
\end{align*}
Thus, we have
\begin{align*}
&\lim_{\ep\downarrow0}\eta(u)\cF[F_{\ep}](u)
\\&\quad=\left(-c(-\al)\frac{d|u|^{\al}(1-i\be\sgn(u)\tan(\pi\al\slash2))}{d(1+\be^2\tan^2(\pi\al\slash2))}\right)
\\&\qquad\times \lim_{\ep\downarrow0}\left(\cF[|x|^{\al-1}e^{-\ep|x|}](u)-\be\cF[|x|^{\al-1}e^{-\ep|x|}\sgn(x)](u)\right)
\\&\quad= \left(-\frac{c(-\al)|u|^{\al}}{1+i\be\sgn(u)\tan(\pi\al\slash2)}\right)
\\&\qquad\times \frac{2\Ga(\al)}{\sqrt{2\pi}}|u|^{-\al}\left(\cos\left(\frac{\pi\al}{2}\right)+i\be\sgn(u)\sin\left(\frac{\pi\al}{2}\right)\right)
\\&\quad=-\frac{2\Ga(\al)c(-\al)}{\sqrt{2\pi}}\cos\left(\frac{\pi\al}{2}\right)=\frac{1}{\sqrt{2\pi}}.
\end{align*}
By $\phi\in C^\infty_c(\bR)$,
we have
\begin{align*}
\left|\frac{u}{\ep\pm iu}\right|^{\al}\cF[\phi](u)\leqslant\cF[\phi](u)\in C^\infty_c(\bR).
\end{align*}
Hence, it follows from the dominated convergence theorem that
\begin{align*}
&\lim_{\ep\downarrow0}\sqrt{2\pi}\cF^{-1}[\eta(u)\cF[F_{\ep}](u)\cF[\phi](u)](x)
\\&\quad=\sqrt{2\pi}\cF^{-1}\left[\lim_{\ep\downarrow0}\eta(u)\cF[F_{\ep}](u)\cF[\phi](u)\right](x)
\\&\quad=\cF^{-1}[\cF[\phi](u)](x) =\phi(x).
\end{align*}
The proof is now complete.
\end{proof}

\begin{remark}
\rm{
A strictly stable process with index $\al \in (0,1)$ is characterized by $(b'_\al,0,\nu_\al)$
where $b'_\al=\int_{|h|\leqslant1}h\nu_\al(dh)=(c_+-c_-)\slash(1-\al)$.
In this case, the operator corresponding to $\cL$ can be represented as
\begin{align*}
\cL' f(x)=\int_{\bR_0}\{f(x+h)-f(x)\}\nu_\al(dh)
\end{align*}
for $f\in C^1_{1+,b}(\bR)$ which is bounded.
Then, the Lemma \ref{lem3-1} holds for $\cL'$.
But the local time does not exist if $\al \in (0,1)$.
Indeed, in {\cite[Theorem V.1]{Bert}}, the condition
\begin{equation}\label{3-1}
\int_\bR \Re\left(\frac{1}{1-\eta(u)}\right) du < \infty
\end{equation}
holds if and only if the occupation time measure $\mu_t$ is absolutely continuous with respect to the Lebesgue measure such that its density is in $L^2(da \otimes d\bP)$.
Moreover, if \eqref{3-1} fails, then the measure $\mu_t$ is singular for every $t>0$, almost surely.
In case of $\al \in (0,1)$, since \eqref{3-1} does not hold, the measure $\mu_t$ is singular.
}
\end{remark}

We introduce a mollifier as follows:
\begin{definition}
\rm{
The function $\rho$ is a mollifier if,
the function $\rho$ on $\bR$ satisfies
\begin{flalign*}
\quad(i)&\quad\rho(x)\geqslant0 \quad \text{for all $x\in\bR$}.&
(ii)&\quad\rho\in C^\infty_c(\bR).&
\\ \quad(iii)&\quad\supp \rho =[-1,1].&
(iv)&\quad\int_\bR\rho(x)dx=1.&
\end{flalign*}
Furthermore, we define $\rho_n(x):=n\rho(nx)$ for all $n\in\bN$.
A family of functions $(\rho_n)_{n\in\bN}$ satisfies that
$\rho_n \to \de_0$ as $n\to\infty$ where $\de_0$ is the Dirac delta function, in the sense of Schwartz distributions, that means
\begin{align*}
\left|\int_\bR\rho_n(x)\phi(x)dx-\phi(0)\right| \to 0 \quad \text{as $n\to\infty$}
\end{align*}
for all $\phi\in\cS(\bR)$.
}
\end{definition}

Now let us state our main theorem which we call the Tanaka formula for arbitrary strictly stable processes with index $\al \in (1,2)$.
\begin{theorem}\label{thm3-1}
Let $F$ be the same as in Lemma \ref{lem3-1}.
Then, 
there exists the local time $L = \{L^a_t: a \in \bR, t\geqslant 0\}$ of $X$
which is given by
\begin{equation*}
L^a_t=F(X_t-a)-F(X_0-a)-M^a_t\quad\text{for $a\in\bR,t\geqslant0$},
\end{equation*}
where the process $(M^a_t)_{t\geqslant0}$ given by
\begin{equation*}
M^a_t=\int ^t_0\int_{\bR_0}\{F(X_{s-}-a+h)-F(X_{s-}-a)\}\tilde{N}(ds,dh)
\end{equation*}
is a square integrable martingale.
\end{theorem}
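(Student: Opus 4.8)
\emph{Proof idea.} The function $F$ is only $(\al-1)$-H\"older at the origin, so it does not belong to $C^2_{1+,b}(\bR)$ and Proposition~\ref{prop2-2} cannot be applied to it directly; the plan is to mollify. Fix $a\in\bR$ and put $F_n:=F*\rho_n$. Arguing exactly as in the proof of Lemma~\ref{lem3-1}, $F_n\in C^\infty_{1+,b}(\bR)$, so $x\mapsto F_n(x-a)$ lies in $C^2_{1+,b}(\bR)$, and Lemma~\ref{lem3-1} applied with $\phi=\rho_n$ gives $\cL F_n=\rho_n$. Hence the $\cL$-form of the It\^o formula (Proposition~\ref{prop2-2}) yields, for every $t\geqslant0$,
\begin{equation*}
F_n(X_t-a)-F_n(X_0-a)=M^{a,n}_t+\int^t_0\rho_n(X_s-a)\,ds,
\end{equation*}
where $M^{a,n}_t:=\int^t_0\int_{\bR_0}\{F_n(X_{s-}-a+h)-F_n(X_{s-}-a)\}\tilde N(ds,dh)$ is a square integrable martingale (cf.\ the remarks in Section~\ref{sec2}).

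The heart of the proof is to show that the stochastic integral defining $M^a$ makes sense, i.e.\ that
\begin{equation*}
\int^t_0\bE\!\left[\int_{\bR_0}\big|F(X_{s-}-a+h)-F(X_{s-}-a)\big|^2\nu_\al(dh)\right]ds<\infty ,
\end{equation*}
which then makes $M^a$ a square integrable martingale. I would split the inner integral at $|h|=1$. On $\{|h|\leqslant1\}$ one uses that $F$ is globally $(\al-1)$-H\"older and is $C^1$ off the origin with $|F'(z)|\leqslant C|z|^{\al-2}$; splitting once more according to whether $2|h|<|X_{s-}-a|$ or not gives, up to bounded terms, the pointwise bound $C|X_{s-}-a|^{\al-2}$, whose expectation is a negative moment of order $2-\al\in(0,1)$, hence at most $C\,s^{-(2-\al)/\al}$ by Lemma~\ref{lem2-2}; and $\int^t_0 s^{-(2-\al)/\al}\,ds<\infty$ \emph{precisely because} $\al>1$. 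On $\{|h|>1\}$ one bounds the increment crudely by $C(|X_{s-}-a+h|^{\al-1}+|X_{s-}-a|^{\al-1})$, integrates against $\nu_\al$ after the substitution $z=y+h$, and obtains $C(1+|X_{s-}-a|^{2\al-2})$; its expectation is a positive moment of order $2\al-2<\al$ (here $\al<2$ enters), finite by Lemma~\ref{lem2-1} and integrable over $[0,t]$ by self-similarity. Exactly the same two bounds hold with $F$ replaced by $F_n$, \emph{uniformly in $n$}, since $|F_n(y+h)-F_n(y)|\leqslant\int_\bR\rho_n(w)\,|F(y-w+h)-F(y-w)|\,dw$ reproduces both the H\"older estimate and the $|y|^{\al-2}|h|$ estimate. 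This estimate is the step I expect to be the main obstacle: it is the unique point where the construction is tied to $\al\in(1,2)$ (the small-jump part needs $\al>1$ for time integrability, the large-jump part needs $\al<2$ for a finite moment), and it must be carried out in an $n$-uniform form so that the limit can be taken.

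Granting this, I would define $L^a_t:=F(X_t-a)-F(X_0-a)-M^a_t$ and let $n\to\infty$. Since $F_n\to F$ uniformly on compact sets, $F_n(X_t-a)\to F(X_t-a)$ and $F_n(X_0-a)\to F(X_0-a)$ almost surely; and $\bE|M^{a,n}_t-M^a_t|^2\to0$ by the It\^o isometry and dominated convergence, the integrands converging pointwise while being dominated, uniformly in $n$, by the integrable majorant of the previous paragraph. Subtracting the two identities, $\int^t_0\rho_n(X_s-a)\,ds\to L^a_t$ in $L^2$; in particular $L^a_t\geqslant0$ almost surely, being an $L^2$-limit of nonnegative quantities.

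It remains to verify the occupation time formula, for which it suffices to treat nonnegative $f\in C_c(\bR)$ and then extend to all bounded Borel $f\geqslant0$ by a monotone class argument. Multiplying the mollified identity by $f(a)$ and integrating in $a$, the right-hand side becomes $\int^t_0 (f*\rho_n)(X_s)\,ds$, which converges almost surely to $\int^t_0 f(X_s)\,ds$ because $f*\rho_n\to f$ Lebesgue-almost everywhere and, by Lemma~\ref{lem2-3}, $\bE\int^t_0\textbf{1}_{\{X_s\in N\}}\,ds=\int^t_0\bP(X_s\in N)\,ds=0$ for every Lebesgue-null set $N$; the left-hand side converges (in $L^2$, hence along a subsequence almost surely) to $\int_\bR f(a)\big(F(X_t-a)-F(X_0-a)-M^a_t\big)\,da=\int_\bR f(a)L^a_t\,da$, using the uniform-on-compacta convergence of $F_n$ for the first two terms (as $f$ has compact support) and Cauchy--Schwarz in $a$ together with the $n$-uniform $L^2$-bound on $a\mapsto M^{a,n}_t$ for the martingale term (which also furnishes the joint measurability of $(a,\om)\mapsto M^a_t$). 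Since $f$ was arbitrary, the two Radon measures $A\mapsto\int^t_0\textbf{1}_A(X_s)\,ds$ and $A\mapsto\int_A L^a_t\,da$ coincide, and the occupation time formula follows; hence $L$ is a local time of $X$.
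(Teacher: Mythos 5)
Your proposal is correct and follows essentially the same route as the paper: mollify $F$, apply Proposition~\ref{prop2-2} with $\cL F_n=\rho_n$ from Lemma~\ref{lem3-1}, establish the square-integrability of $M^a$ by interpolating between the $(\al-1)$-H\"older bound and the $|x|^{\al-2}$ gradient bound together with the negative-moment Lemma~\ref{lem2-2} (you perform the $h$-integration first and land on a moment of order $2-\al$, whereas the paper keeps an auxiliary $\ep_0\in(0,(\al-1)\wedge(2-\al))$ and integrates $|h|^{\al+\ep_0}\wedge|h|^{2\al-2}$ against $\nu_\al$, but these are the same estimate), and then pass to the limit and verify the occupation formula against $C_c$ test functions. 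The only blemish is your appeal to Lemma~\ref{lem2-3} to conclude $\bP(X_s\in N)=0$ for Lebesgue-null $N$ --- absence of atoms does not give that; it follows instead from the existence of the transition density $p_s$ noted in the proof of Lemma~\ref{lem2-2}, and in any case is unnecessary since $f*\rho_n\to f$ uniformly for $f\in C_c(\bR)$.
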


\begin{remark}
\rm{
The local time can be also represented by
\begin{align*}
L^a_t=\lim_{n\to\infty}\int^t_0\rho_n(X_s-a)ds\quad\text{in $L^2(d\bP)$},
\end{align*}
where $(\rho_n)_{n\in\bN}$ is given by $\rho_n(x)=n\rho(nx)$ for all $n\in\bN$
with a mollifier $\rho$.
}
\end{remark}

\begin{proof}[Proof of Theorem {\ref{thm3-1}}]
Let $F_n = F * \rho_n$ for all $n\in\bN$.
By the argument in the proof of Lemma \ref{lem3-1},
we have $F_n \in C^{\infty}_{1+, b}(\bR)$.
By the It\^o formula (Proposition \ref{prop2-2}),
we have for all $t\geqslant0$ and $a\in\bR$,
\begin{equation*}
F_n(X_t-a)-F_n(X_0-a) = M^{a,n}_t + V^{a,n}_t,
\end{equation*}
where 
\begin{equation*}
M^{a,n}_t = \int^t_0 \int_{\bR_0} \{F_n(X_{s-} - a + h) - F_n(X_{s-} - a)\} \tilde{N}(ds,dh)
\end{equation*}
and
\begin{equation*}
V^{a,n}_t = \int^t_0 \cL F_n(X_s - a) ds.
\end{equation*}

First we will show that $F_n(X_t-a), F(X_t-a) \in L^2(d\bP)$ and $F_n(X_t-a)\to F(X_t-a)$ in $L^2(d\bP)$ as $n\to\infty$.
Using the following inequality:
\begin{align*}
|x+y|^{\al-1} \leqslant |x|^{\al-1} + |y|^{\al-1}\quad\text{for all $x,y \in \bR$},
\end{align*}
we have for $x \in \bR$,
\begin{align*}
0 &\leqslant F_n(x)=\int_\bR F(x-y)\rho_n(y)dy
\\&\leqslant \int^{1\slash n}_{-1\slash n} 2D(\al)\left(|x|^{\al-1}+|y|^{\al-1}\right)\rho_n(y)dy
\leqslant 2D(\al)\left(|x|^{\al-1} + 1\right).
\end{align*}
By Lemma \ref{2-1},
we have
\begin{align*}
\bE[|F_n(X_t-a)|^2]&\leqslant4D(\al)^2\bE[(|X_t-a|^{\al-1}+1)^2]
\\&\leqslant12D(\al)^2\left(\bE[|X_t|^{2\al-2}]+|a|^{2\al-2} +1\right)<\infty,
\end{align*}
since $0<2\al-2<\al$.
Similarly,
we have
\begin{align*}
\bE[|F(X_t-a)|^2]&\leqslant4D(\al)^2\bE[(|X_t|^{\al-1}+|a|^{\al-1})^2]
\\&\leqslant8D(\al)^2\left(\bE[|X_t|^{2\al-2}]+|a|^{2\al-2}\right)<\infty.
\end{align*}
Hence, it follows from the dominated convergence theorem that
\begin{align}\label{3-2}
&\lim_{n\to\infty}\bE[|F_n(X_t-a)-F(X_t-a)|^2] \notag
\\&\quad=\bE\left[\lim_{n\to\infty}|F_n(X_t-a)-F(X_t-a)|^2\right] =0.
\end{align}

Next, we will show that $M^{a,n}_t$ and $M^a_t$ are square integrable martingales,
and $M^{a,n}_t\to M^a_t$ in $L^2(d\bP)$ as $n\to\infty$.
Since $\left| |x+1|^{\al-1}-1\right|\leqslant|x|$ for all $x\in\bR$,
we have for all $0\leqslant\ep\leqslant2-\al$,
\begin{align*}
\left| |x+1|^{\al-1}-1\right|
&\leqslant |x|^{(\al+\ep)\slash2}\wedge|x|^{\al-1},
\\ \left| |x+1|^{\al-1}\sgn(x+1)-1\right|
&\leqslant 3\left(|x|^{(\al+\ep)\slash2}\wedge |x|^{\al-1}\right),
\end{align*}
by $\al-1<(\al+\ep)\slash2\leqslant1$.
Thus, we have for $x\neq0$ and $y\in\bR$,
\begin{align*}
&|F(x+y)-F(x)|^2
\\&\quad=|D(\al)\{|x+y|^{\al-1}-\be|x+y|^{\al-1}\sgn(x+y)\}
\\&\qquad-D(\al)\{|x|^{\al-1}-\be|x|^{\al-1}\sgn(x)\}|^2
\\&\quad\leqslant 2D(\al)^2\left| |x+y|^{\al-1}-|x|^{\al-1}\right|^2
\\&\qquad+2D(\al)^2\left| |x+y|^{\al-1}\sgn(x+y)-|x|^{\al-1}\sgn(x)\right|^2
\\&\quad=2D(\al)^2|x|^{2\al-2}\left| \left|1+\frac{y}{x}\right|^{\al-1}-1\right|^2
\\&\qquad+2D(\al)^2|x|^{2\al-2}\left| \left|1+\frac{y}{x}\right|^{\al-1}\sgn\left(1+\frac{y}{x}\right)-1\right|^2
\\&\quad\leqslant8D(\al)^2\left(|x|^{\al-\ep-2}|y|^{\al+\ep}\wedge|y|^{2\al-2}\right)
\end{align*}
Now, choose $\ep_0$ such that $0<\ep_0<(\al-1)\wedge(2-\al)$.
Since the law of $X_t$ is continuous for all $t>0$ by Lemma \ref{lem2-3},
it follows from the Cauchy--Schwarz inequality, Fubini's theorem and Lemma \ref{lem2-2} that for all $s>0$,
\begin{align*}
&\bE\left[|F_n(X_s+h-a)-F_n(X_s-a)|^2\right]
\\&\quad\leqslant\bE\left[\int_\bR\rho_n(y)|F(X_s+h-a-y)-F(X_s-a-y)|^2dy\right]
\\&\quad\leqslant8D(\al)^2\int_\bR\rho_n(y)|h|^{2\al-2}\textbf{1}_{\{|h|>1\}}dy
\\&\qquad+8D(\al)^2\int_\bR\rho_n(y)\bE\left[|X_s-a-y|^{\al-\ep_0-2}|h|^{\al+\ep_0}\right]\textbf{1}_{\{|h|\leqslant1\}}dy
\\&\quad\leqslant8D(\al)^2|h|^{2\al-2}\textbf{1}_{\{|h|>1\}}
\\&\qquad+8D(\al)^2S(\al,-\al+\ep_0+2)s^{(\al-\ep_0-2)\slash\al}|h|^{\al+\ep_0}\textbf{1}_{\{|h|\leqslant1\}},
\end{align*}
by $0<-\al+\ep_0+2<1$.
Similarly,
it follows that for all $s>0$,
\begin{align*}
&\bE\left[|F(X_s+h-a)-F(X_s-a)|^2\right]
\\&\quad\leqslant8D(\al)^2|h|^{2\al-2}\textbf{1}_{\{|h|>1\}}
\\&\qquad+8D(\al)^2\bE\left[|X_s-a|^{\al-\ep_0-2}|h|^{\al+\ep_0}\right]\textbf{1}_{\{|h|\leqslant1\}}
\\&\quad\leqslant8D(\al)^2|h|^{2\al-2}\textbf{1}_{\{|h|>1\}}
\\&\qquad+8D(\al)^2 S(\al,2+\ep_0-\al) s^{(\al-\ep_0-2)\slash\al} |h|^{\al+\ep_0}\textbf{1}_{\{|h|\leqslant1\}}.
\end{align*}
By $0<\ep_0<\al-1$ and $1<\al<2$,
we know
\begin{align*}
\int_{\bR_0}\left(|h|^{\al+\ep_0}\wedge|h|^{2\al-2} \right)\nu_\al(dh)=\frac{c_++c_-}{\ep_0}+\frac{c_++c_-}{2-\al}<\infty,
\end{align*}
and
\begin{align*}
\int^t_0 s^{(\al-\ep_0-2)\slash\al}ds=\frac{\al}{2\al-\ep_0-2}t^{(2\al-\ep_0-2)\slash\al}<\infty.
\end{align*}
Hence, it follows that $M^{a,n}_t$ and $M^a_t$ are square integrable martingales,
and from the dominated convergence theorem and  \eqref{3-2} that
\begin{align}\label{3-3}
&\lim_{n\to\infty}\bE[|M^{a,n}_t-M^a_t|^2]  \notag
\\&\quad=\lim_{n\to\infty}\int^t_0\int_{\bR_0}\bE[|F_n(X_s+h-a)-F_n(X_s-a) \notag
\\&\qquad-\{F(X_s+h-a)-F(X_s-a)\}|^2]\nu_\al(dh)ds \notag
\\&\quad=\int^t_0\int_{\bR_0}\lim_{n\to\infty}\bE[|F_n(X_s+h-a)-F_n(X_s-a) \notag
\\&\qquad-\{F(X_s+h-a)-F(X_s-a)\}|^2]\nu_\al(dh)ds\notag
\\&\quad=0. 
\end{align}

Finally, we will show that $L^a_t:=F(X_t-a)-F(X_0-a)-M^a_t$ is the local time of $X$.
It is sufficient to show that an occupation time formula holds for $g\in C_c(\bR)$. 
By Lemma \ref{lem3-1} and Fubini's theorem, we have for a.e.-$\om$,
\begin{align*}
\int_\bR g(a)V^{a,n}_t(\om)da
&=\int_\bR g(a)\int^t_0\rho_n(X_s(\om)-a)dsda
\\&=\int^t_0 (g*\rho_n)(X_s(\om))ds.
\end{align*}
Thus, we will show that
\begin{align*}
\lim_{n\to\infty}\int_\bR g(a)V^{a,n}_tda=\int_\bR g(a)L^a_tda \quad\text{in $L^2(d\bP)$},
\end{align*}
and
\begin{align*}
\lim_{n\to\infty}\int^t_0 (g*\rho_n)(X_s)ds=\int^t_0g(X_s)ds\quad\text{in $L^2(d\bP)$}.
\end{align*}
By using the Cauchy--Schwarz inequality and Fubini's theorem,
it follows from the dominated convergence theorem, \eqref{3-2} and \eqref{3-3} that for $g\in C_c(\bR)$,
\begin{align*}
&\bE\left[\left|\int_\bR g(a)V^{a,n}_tda-\int_\bR f(a)L^a_tda\right|^2\right]
\\&\quad\leqslant\bE\left[\int_\bR |g(a)|da\int_\bR|g(a)||V^{a,n}_t-L^a_t|^2da\right]
\\&\quad\leqslant\int_\bR |g(a)|da\int_\bR|g(a)|\bE\left[|F_n(X_t-a)-F(X_t-a)|^2\right] da
\\&\qquad+\int_\bR |g(a)|da\int_\bR|g(a)|\bE\left[|F_n(X_0-a)-F(X_0-a)|^2\right] da
\\&\qquad+\int_\bR |g(a)|da\int_\bR|g(a)|\bE\left[|M^{a,n}_t-M^a_t|^2\right] da
\\&\quad\to 0 \quad \text{as $n\to\infty$}.
\end{align*}
By using the Cauchy--Schwarz inequality and Fubini's theorem,
it follows from the dominated convergence theorem that for $g\in C_c(\bR)$,
\begin{align*}
&\bE\left[\left|\int^t_0 (g*\rho_n)(X_s)ds-\int^t_0g(X_s)ds\right|^2\right]
\\&\quad\leqslant t\bE\left[\int^t_0\left|\int_\bR \rho_n(y)\{g(X_s-y)-g(X_s)\}dy\right|^2ds\right]
\\&\quad\leqslant t\bE\left[\int^t_0\int_\bR \rho_n(y)|g(X_s-y)-g(X_s)|^2dyds\right]
\\&\quad=t\int^t_0\int_\bR n\rho(ny)\bE\left[|g(X_s-y)-g(X_s)|^2\right]dyds
\\&\quad= t\int^t_0\int_\bR\rho(z)\bE\left[|g(X_s-n^{-1}z)-g(X_s)|^2\right]dzds
\\&\quad\to 0 \quad \text{as $n\to\infty$}.
\end{align*}
The proof is now complete.
\end{proof}

\section*{Acknowledgements}
I am very grateful to Professor Yoshiki Otobe of Shinshu University for his kind guidance and valuable discussion.
I also thank Professor Kouji Yano of Kyoto University and Professor Atsushi Takeuchi of Osaka City University for their valuable advice.


\begin{thebibliography}{99}
\bibitem{App}
D. Applebaum,
L\'evy processes and stochastic calculus.
Second edition.
Cambridge Studies in Advanced Mathematics, 116,
Cambridge University Press, Cambridge, 2009.

\bibitem{Bar}
M. T. Barlow,
{\it Necessary and sufficient conditions for the continuity of local time of L\'evy processes},
Ann. Probab. 16 (1988), no. 4, pp. 1389--1427.

\bibitem{Berm}
S. M. Berman,
{\it Local times and sample function properties of stationary Gaussian processes}, 
Trans. Amer. Math. Soc. 137 (1969), pp. 277--299.

\bibitem{Bert}
J. Bertoin,
L\'evy processes.
Cambridge Tracts in Mathematics, 121,
Cambridge University Press, Cambridge, 1996.

\bibitem{Boy}
E. S. Boylan,
{\it Local times for a class of Markoff processes}, 
Illinois J. Math. 8 (1964), pp. 19--39.

\bibitem{Eng}
H. J. Engelbert, V. P. Kurenok and A. Zalinescu,
{\it On existence and uniqueness of reflected solutions of stochastic equations driven by symmetric stable processes},
From Stochastic Calculus to Mathematical Finance, 
Springer Berlin Heidelberg, (2006), pp. 227--248.

\bibitem{Ike}
N. Ikeda and S. Watanabe,
Stochastic differential equations and diffusion processes. 
Second edition.
North-Holland Mathematical Library, 24,
North-Holland Publishing Co., Amsterdam; Kodansha, Ltd., Tokyo, 1989.

\bibitem{Kom}
T. Komatsu,
{\it On the pathwise uniqueness of solutions of one-dimensional stochastic differential equations of jump type}, 
Proc. Japan Acad. Ser. A Math. Sci. 58 (1982), no. 8, pp. 353--356.

\bibitem{Sal}
P. Salminen and M. Yor,
{\it Tanaka formula for symmetric L\'evy processes},
S\'eminaire de Probabilit\'es XL,  
Lecture Notes in Math., 1899, 
Springer, Berlin, (2007), pp. 265--285.

\bibitem{Sat}
K. Sato,
L\'evy processes and infinitely divisible distributions.
Cambridge Studies in Advanced Mathematics, 68,
Cambridge University Press, Cambridge, 1999.

\bibitem{Tro}
H. F. Trotter,
{\it A property of Brownian motion paths}, 
Illinois J. Math. 2 (1958), pp. 425--433.

\bibitem{Wat}
S. Watanabe,
{\it On stable processes with boundary conditions},
J. Math. Soc. Japan. 14.2 (1962), pp. 170--198.

\bibitem{Yam}
K. Yamada,
{\it Fractional derivatives of local times of $ \alpha $-stable Levy processes as the limits of occupation time problems},
Limit theorems in probability and statistics, Vol. II (Balatonlelle, 1999),
J\'anos Bolyai Math. Soc., Budapest, (2002), pp. 553--573.

\bibitem{Yan}
K. Yano,
{\it On harmonic function for the killed process upon hitting zero of asymmetric L\'evy processes},
J. Math-for-Ind. 5A (2013), pp. 17--24.
\end{thebibliography}
\end{document}